%started in October 22th, 2009
%restarted in December 26, 2009
%modify October 25, 2010
%modify February 11, 2011
%----------------------------------------------
\documentclass[12pt]{amsart}
\usepackage{amsmath, amsthm, amscd, amsfonts}

\setlength{\textwidth}{6.5in}
\setlength{\textheight}{8.7in}
\setlength{\evensidemargin}{-0.2in}
\setlength{\oddsidemargin}{-0.2in}

\newtheorem{theorem}{Theorem}[section]
\newtheorem{lemma}[theorem]{Lemma}
\newtheorem{proposition}[theorem]{Proposition}
\newtheorem{corollary}[theorem]{Corollary}
\theoremstyle{definition}
\newtheorem{definition}[theorem]{Definition}

\newtheorem{problem}[theorem]{Problem}
\theoremstyle{remark}

\newtheorem{remark}[theorem]{Remark}
\numberwithin{equation}{section}

\newfont{\kh}{msbm10}

%%%%%%%%%%%%%%%%%%%%%%%%%%%%%%%%%%%%%%%%%%%%%%%%%%%%%%%%%%%%%
\begin{document}
\title[The product of operators with closed range]
{The product of operators with closed range in Hilbert C*-modules}
\author{K. Sharifi}
\address{Kamran Sharifi, \newline Department of Mathematics,
Shahrood University of Technology, P. O. Box 3619995161-316,
Shahrood, Iran \newline
 School of Mathematics, Institute for Research in
 Fundamental Sciences (IPM), P.O. Box: 19395-5746, Tehran, Iran}
\email{sharifi.kamran@gmail.com and
sharifi@shahroodut.ac.ir}

\thanks{This research was in part supported by a
grant from IPM (No. 89460018).}

\subjclass[2000]{Primary 47A05; Secondary 15A09, 46L08, 46L05}
\keywords{Bounded adjointable operator, Moore-Penrose inverse,
closed range, Hilbert C*-module, C*-algebra, Dixmier angle}

\begin{abstract}
Suppose $T$ and $S$ are bounded adjointable operators
with close range between Hilbert C*-modules, then $TS$ has closed
range if and only if $Ker(T)+Ran(S)$ is an orthogonal summand, if
and only if $Ker(S^*)+Ran(T^*)$ is an orthogonal summand. Moreover,
if the Dixmier (or minimal) angle between $Ran(S)$ and $Ker(T) \cap [Ker(T) \cap
Ran(S)]^{\perp}$ is positive and $ \overline{Ker(S^*)+Ran(T^* )}
$ is an orthogonal summand then $TS$ has closed range.
\end{abstract}
\maketitle

%%%%%%%%%%%%%%%%%%%%%%%%%%%%%%%%%%%%%%%%%%%%%%%%%%%%%%%%%%%%%%
\section{Introduction.}
The closeness of range of operators is an attractive and
important problem which appears in operator theory, especially,
in the theory of Fredholm operators and generalized inverses. In
this paper we will investigate when the product of two operators
with closed range again has closed range. This problem was first
studied by Bouldin  for bounded operators between Hilbert spaces
in \cite{Bouldin1972, Bouldin1977}. Indeed, for Hilbert space
operators $T,S$ whose ranges are closed, he proved that the range
of $TS$ is closed if and only if the Dixmier (or minimal) angle between
$Ran(S)$ and $Ker(T) \cap [Ker(T) \cap Ran(S)]^{\perp}$ is
positive, where the Dixmier
angle between subspaces $M$ and $N$ of a certain Hilbert
space is the angle $ \alpha_0 (M,N)$ in $[0, \pi /2]$
whose cosine is defined by $c_0(M,N)= {\rm sup} \{  \| \langle x,y
\rangle \|  :  x \in M, \ \|x \| \leq 1 \, , \ y \in N, \ \|y \|
\leq 1 \}.$ Nikaido \cite{Nikaido1980, Nikaido1986} also gave
topological characterizations of the problem for the Banach space
operators. Recently (Dixmier and Friedrichs) angles between
linear subspaces have been studied systematically by Deutsch
\cite{Deutsch/angle}, he also has reconsidered the closeness of
range of the product of two operators with closed range. In this
note we use C*-algebras techniques to reformulate some results of
Bouldin and Deutsch in the framework of Hilbert C*-modules. Some
further characterizations of modular operators with closed range
are obtained.

Hilbert C*-modules are essentially objects like Hilbert spaces,
except that the inner product, instead of being complex-valued,
takes its values in a C*-algebra. Since the geometry of these
modules emerges from the C*-valued inner product, some basic
properties of Hilbert spaces like Pythagoras' equality,
self-duality, and decomposition into orthogonal complements do
not hold. The theory of Hilbert C*-modules, together with
adjointable operators forms an infrastructure for some of the
most important research topics in operator algebras, in Kasparov's
KK-theory and in noncommutative geometry.

A (left) {\it pre-Hilbert C*-module} over a C*-algebra
$\mathcal{A}$ is a left $\mathcal{A}$-module $E$ equipped with an
$\mathcal{A}$-valued inner product $\langle \cdot , \cdot \rangle
: E \times E \to \mathcal{A}\,, \ (x,y) \mapsto \langle x,y
\rangle$, which is $\mathcal A$-linear in the first variable $x$
(and conjugate-linear in $y$) and has the properties:
$$ \langle x,y \rangle=\langle y,x \rangle ^{*}, \ \langle ax,y \rangle
=a \langle x,y \rangle \, \ {\rm for} \ {\rm all} \ a \ {\rm in} \
\mathcal{A},$$ $$ \langle x,x \rangle \geq 0 \ \ {\rm with} \
   {\rm equality} \ {\rm only} \ {\rm when} \  x=0.$$

A pre-Hilbert $\mathcal{A}$-module $E$ is called a {\it Hilbert $
\mathcal{A}$-module} if $E$ is a Banach space with respect to the
norm $\| x \|=\|\langle x,x\rangle \| ^{1/2}$.  A Hilbert
$\mathcal A$-submodule $E$ of a Hilbert $\mathcal A$-module $F$
is an orthogonal summand if $F=E \oplus E^\bot $, where $E^\bot
:=\{ y \in F: ~ \langle x,y \rangle=0~ \ {\rm for} \ {\rm all}\ x
\in E \}$ denotes the orthogonal complement of $E$ in $F$. The
papers \cite{FR3, FR2} and the books \cite{LAN, M-T} are used as
standard sources of reference.

Throughout the present paper we assume $\mathcal{A}$ to be an
arbitrary C*-algebra (i.e. not necessarily unital). We use the
notations $Ker(\cdot)$ and $Ran(\cdot)$ for kernel and range of
operators, respectively. We denote by $\mathcal{L}(E,F)$ the
Banach  space of all bounded adjointable operators between $E$ and
$F$, i.e., all bounded $\mathcal A$-linear maps $T :E \rightarrow
F$ such that there exists $T^*:F \rightarrow E$ with the property
$ \langle Tx,y \rangle =\langle x,T^*y \rangle$ for all $ x \in
E$, $y \in F$. The C*-algebra $\mathcal{L}(E,E)$ is abbreviated
by $\mathcal{L}(E)$.

In this paper we first briefly investigate some basic facts about
Moore-Penrose inverses of bounded adjointable operators on
Hilbert C*-modules and then we give some necessary and sufficient
conditions for closeness of the range of the product of two
orthogonal projections. These lead us to our main results. Indeed,
for adjointable module maps $T,S$ whose ranges are closed we show
that the operator $TS$ has closed range if and only if
$Ker(T)+Ran(S)$ is an orthogonal summand, if and only if
$Ker(S^*)+Ran(T^*)$ is an orthogonal summand. The Dixmier angle
between submodules $M$ and $N$ of a Hilbert C*-module $E$ is the
angle $ \alpha_0 (M,N)$ in $[0, \pi /2]$ whose cosine is defined
by $$c_0(M,N)= {\rm sup} \{  \| \langle x,y \rangle \| : x \in M,
\ \|x \| \leq 1 \, , \ y \in N, \ \|y \| \leq 1 \}.$$  If the
Dixmier angle between $Ran(S)$ and $Ker(T) \cap [Ker(T) \cap
Ran(S)]^{\perp}$ is positive and $ \overline{Ker(S^*)+Ran(T^* )}
$ is an orthogonal summand then $TS$ has closed range. Since
every C*-algebra is a Hilbert C*-module over itself, our results
are also remarkable in the case of bounded adjointable operators
on C*-algebras.
%%%%%%%%%%%%%%%%%%%%%%%%%%%%%%%%%%%%%%%%%%%%%%%%%%%%%%%%%%%%%%%%%%
\section{Preliminaries}

Closed submodules of Hilbert modules need not to be orthogonally
complemented at all, but Lance states in \cite[Theorem 3.2]{LAN}
under which conditions closed submodules may be orthogonally
complemented (see also \cite[Theorem 2.3.3]{M-T}). Let $E$, $F$ be
two Hilbert $\mathcal{A}$-modules and suppose that an operator
$T$ in $ \mathcal{L}(E,F)$ has closed range, then one has:
\begin{itemize}
\item $Ker(T)$ is orthogonally complemented in $E$, with
      complement $Ran(T^*)$,
\item $Ran(T)$ is orthogonally complemented in $F$, with
      complement $Ker(T^*)$,
\item the map $T^* \in \mathcal{L}(F,E)$ has closed range, too.
\end{itemize}
\begin{lemma}\label{closedT*T} Suppose $T \in  \mathcal{L}(E,F)$.
 The operator $T$ has closed range if and only if $T \,T^*$ has
closed range. In this case, $Ran(T)=Ran(T\, T^*)$.
\end{lemma}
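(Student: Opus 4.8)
**Plan for proving Lemma (closed range of $T$ iff closed range of $TT^*$).**

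The plan is to prove both implications, with the forward direction being routine and the reverse direction being the substantive one. For the forward direction, suppose $T$ has closed range. Then by the facts quoted from Lance just before the lemma, $T^*$ also has closed range, $Ran(T^*)$ is orthogonally complemented with complement $Ker(T)$, and $Ran(T)$ is orthogonally complemented with complement $Ker(T^*)$. I would then argue $Ran(TT^*) = Ran(T)$ directly: clearly $Ran(TT^*) \subseteq Ran(T)$. For the reverse containment, take $y = Tx$ for some $x \in E$; write $x = u + v$ with $u \in Ker(T)$ and $v \in Ran(T^*)$ using the orthogonal decomposition $E = Ker(T) \oplus Ran(T^*)$. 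Then $Tx = Tv$, and since $v \in Ran(T^*) = \overline{Ran(T^*)}$, I need $v$ to actually lie in $Ran(T^*)$ itself (not just its closure) — which it does because $Ran(T^*)$ is closed. Hence $v = T^*w$ for some $w$, so $y = Tv = TT^*w \in Ran(TT^*)$. This gives $Ran(T) = Ran(TT^*)$, which is closed, and also delivers the final sentence of the lemma.

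For the reverse direction, suppose $TT^*$ has closed range; I must show $Ran(T)$ is closed. The cleanest route is to show $Ran(T) = Ran(TT^*)$ again, or at least $\overline{Ran(T)} \subseteq Ran(TT^*)$. The key observation is the C*-algebraic inequality relating $T$ and $TT^*$: for all $y \in F$ one has $\|T^*y\|^2 = \|\langle T^*y, T^*y\rangle\| = \|\langle TT^*y, y\rangle\| \le \|TT^*y\|\,\|y\|$. From this, if $TT^*$ has closed range, then on $Ker(TT^*)^\perp$ (noting $Ker(TT^*)$ is orthogonally complemented since $TT^*$ has closed range) the operator $TT^*$ is bounded below, say $\|TT^*y\| \ge c\|y\|$ for $y$ in that complement; combined with the displayed inequality this forces $\|T^*y\| \ge \sqrt{c}\,\|y\|$ on $Ker(TT^*)^\perp$, so $T^*$ is bounded below there. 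Since $Ker(TT^*) = Ker(T^*)$ (because $\langle TT^*y,y\rangle = \|T^*y\|^2$ vanishes iff $T^*y = 0$), $T^*$ is bounded below on $Ker(T^*)^\perp$, hence $T^*$ has closed range, hence by the forward direction applied to $T^*$ (or by Lance's third bullet) $T$ has closed range as well.

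The main obstacle — and the reason this is a genuine lemma rather than a triviality — is that in Hilbert C*-modules one cannot freely decompose elements into kernel-plus-range pieces: orthogonal complementation is available \emph{only} because closedness of range has been assumed, so the argument must be careful to invoke the Lance theorem at exactly the points where a complemented submodule is needed, and to use $Ker(TT^*)^\perp$ rather than presuming $F = Ker(TT^*) \oplus \overline{Ran(TT^*)}$ a priori. A secondary technical point is justifying "$TT^*$ bounded below on the complement of its kernel" from closedness of range: this is the standard equivalence (closed range $\iff$ bounded below modulo kernel), which holds for adjointable operators on Hilbert C*-modules with complemented kernel, and I would cite it or reprove it in one line via the open mapping theorem applied to the induced map $Ker(TT^*)^\perp \to Ran(TT^*)$, both being Banach spaces.
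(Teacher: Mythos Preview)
Your proposal is correct. The forward direction is essentially the same as the paper's: the paper simply cites the proof of Theorem~3.2 in Lance for the fact that $Ran(TT^*)$ is closed and equals $Ran(T)$, while you unpack that argument explicitly via the decomposition $E = Ker(T) \oplus Ran(T^*)$.

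For the converse, however, your route and the paper's genuinely differ. The paper argues in two lines by a sandwich of orthogonal decompositions: once $TT^*$ has closed range one has
\[
F \;=\; Ran(TT^*)\oplus Ker(TT^*) \;=\; Ran(TT^*)\oplus Ker(T^*) \;\subset\; Ran(T)\oplus Ker(T^*) \;\subset\; F,
\]
which forces $Ran(T)\oplus Ker(T^*)=F$ and hence (since $Ran(T)\perp Ker(T^*)$) $Ran(T)=Ran(TT^*)$ is closed. Your approach instead passes through the $C^*$-Cauchy--Schwarz inequality $\|T^*y\|^2 = \|\langle TT^*y,y\rangle\| \le \|TT^*y\|\,\|y\|$ to transfer a lower bound for $TT^*$ on $Ker(TT^*)^\perp$ to a lower bound for $T^*$ on $Ker(T^*)^\perp$, and then invokes the equivalence ``bounded below on the complement of the kernel $\iff$ closed range''. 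This is perfectly sound; the only caveat is that in the paper that equivalence is recorded later as Lemma~\ref{closed1}, so you would indeed need the one-line open-mapping justification you mention to keep the logical order intact. The paper's argument is shorter and yields $Ran(T)=Ran(TT^*)$ immediately in the converse direction as well; your argument is slightly longer but has the merit of giving the quantitative relation $\gamma(T^*)\ge \sqrt{\gamma(TT^*)}$ and of anticipating the reduced-minimum-modulus viewpoint used elsewhere in the paper.
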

\begin{proof} Suppose $T$ has closed range, the proof of Theorem 3.2
of \cite{LAN} indicates that $Ran(T\, T^*)$ is closed  and
$Ran(T)=Ran(T\, T^*)$.

Conversely, if $T\, T^*$ has closed range then $ F=Ran(T\, T^*)
\oplus Ker(T\, T^*) = Ran(T\, T^*) \oplus Ker(T^*) \subset Ran(T)
\oplus Ker(T^*) \subset F$ which implies $T$ has closed range.
\end{proof}

Let $T \in \mathcal{L}(E,F)$, then a bounded adjointable operator
$S \in \mathcal{L}(F,E)$ is called an {\it inner inverse} of $T$
if $T S T =T$. If $T \in \mathcal{L}(E,F)$ has an inner inverse
$S$ then the bounded adjointable operator $T^{ \times } = S T S$
in $ \mathcal{L}(F,E)$ satisfies
\begin{equation} \label{inner inverse}
T \, T^{ \times } T =T  \   \ {\rm and } \ \  T^{ \times } T \,
T^{ \times } = T.
\end{equation}
The bounded adjointable operator $T^{ \times }$ which satisfies
(\ref{inner inverse}) is called {\it generalized inverse} of $T$.
It is known that a bounded adjointable operator $T$ has a
generalized inverse if and only if $Ran(T)$ is closed, see e.g.
\cite{Hejazian/JMAA, Xu/common}.

Let $T \in \mathcal{L}(E,F)$, then a bounded adjointable operator
$T^{ \dag} \in \mathcal{L}(F,E)$ is called the {\it Moore-Penrose
inverse} of $T$ if
\begin{equation} \label{MP inverse}
T \, T^{ \dag}T=T, \ T^{ \dag}T \, T^{ \dag}= T^{ \dag}, \ (T \,
T^{ \dag})^*=T \, T^{ \dag} \ {\rm and} \ ( T^{ \dag} T)^*= T^{
\dag} T.
\end{equation}
The notation $T^{ \dag}$ is reserved to denote the Moore-Penrose
inverse of $T$. These properties imply that $T^{ \dag}$ is unique
and $ T^{ \dag} T$ and $ T \, T^{ \dag} $ are orthogonal
projections. Moreover, $Ran( T^{ \dag} )=Ran( T^{ \dag}  T)$,
$Ran( T )=Ran( T \, T^{ \dag})$,  $Ker(T)=Ker( T^{ \dag} T)$ and
$Ker(T^{ \dag})=Ker( T \, T^{ \dag} )$ which lead us to $ E= Ker(
T^{ \dag} T) \oplus Ran( T^{ \dag} T)= Ker(T) \oplus Ran( T^{
\dag} )$ and $F= Ker(T^{ \dag}) \oplus Ran(T).$

 Xu and Sheng in \cite{Xu/Sheng} have shown that a bounded
adjointable operator between two Hilbert C*-modules admits a
bounded Moore-Penrose inverse if and only if the operator has
closed range. The reader should be aware of the fact that a
bounded adjointable operator may admit an unbounded operator as
its Moore-Penrose, see \cite{FS2, SHA/PARTIAL, SHA/Groetsch} for
more detailed information.

\begin{proposition} \label{closed0} Suppose $E, F, G$ are Hilbert
$ \mathcal{A}$-modules and  $S \in \mathcal{L}(E,F)$ and
$T \in \mathcal{L}(F,G)$ are bounded adjointable operators with closed ranges.
Then $TS$ has a generalized inverse if and only if $ T^{ \dag} TS S^{ \dag}$ has.
In particular, $TS$ has closed range if and only if $ T^{ \dag} TS
S^{ \dag}$ has.
\end{proposition}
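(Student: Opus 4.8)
The plan is to reduce everything to a statement about when the products of the relevant operators have closed range, using the Moore--Penrose identities recalled above together with Lemma~\ref{closedT*T}. The key elementary fact I will use repeatedly is that for an operator $A$ with closed range, $A^{\dag}A$ and $AA^{\dag}$ are the orthogonal projections onto $Ran(A^{*})$ and $Ran(A)$ respectively; in particular $A = AA^{\dag}A$, so $A$ factors through these projections. Since $S$ and $T$ already have closed range, $S^{\dag}$ and $T^{\dag}$ exist and are bounded and adjointable.

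First I would prove the ``in particular'' clause assuming the first equivalence, noting that by Xu--Sheng an operator between Hilbert C*-modules has closed range exactly when it admits a (bounded) generalized inverse, so ``has a generalized inverse'' and ``has closed range'' are interchangeable here; thus the second sentence is just the first sentence restated. So the real content is the equivalence ``$TS$ has a generalized inverse $\iff$ $T^{\dag}TSS^{\dag}$ has a generalized inverse.''

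For the forward direction, suppose $TS$ has closed range, i.e.\ admits a generalized inverse $(TS)^{\times}$. I would write $T^{\dag}TSS^{\dag} = T^{\dag}\,(TS)\,S^{\dag}$ and look for an inner inverse of the form $S\,(TS)^{\times}\,T$, or more carefully $S\,(TS)^{\dag}\,T$ composed with the appropriate projections. The computation to check is
\begin{equation}
\bigl(T^{\dag}TSS^{\dag}\bigr)\,\bigl(S\,(TS)^{\dag}\,T\bigr)\,\bigl(T^{\dag}TSS^{\dag}\bigr) \;=\; T^{\dag}TSS^{\dag},
\end{equation}
where I would use $SS^{\dag}S = S$, $TT^{\dag}T = T$ to collapse the projections and $TS\,(TS)^{\dag}\,TS = TS$ in the middle. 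Conversely, if $T^{\dag}TSS^{\dag}$ has closed range with generalized inverse $B$, I would propose $S^{\dag}\,B\,T^{\dag}$ as an inner inverse for $TS$ and verify $TS\,(S^{\dag}BT^{\dag})\,TS = TS$ by inserting $T^{\dag}T$ and $SS^{\dag}$ (which act as identities when pre/post-composed appropriately because $Ran(S)\subseteq Ran(TT^{\dag}\cdot)$... more precisely $TT^{\dag}T=T$ and $SS^{\dag}S=S$), reducing the left side to $TS\cdot(\text{stuff})\cdot TS$ and recognizing the ``stuff'' as exactly $B$ sandwiched so that $T^{\dag}TSS^{\dag}\,B\,T^{\dag}TSS^{\dag} = T^{\dag}TSS^{\dag}$ forces the whole expression back to $TS$.

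The main obstacle I anticipate is bookkeeping: unlike the Hilbert space case one cannot freely split off orthogonal complements, so every cancellation must be justified by one of the four Moore--Penrose identities rather than by a geometric decomposition. In particular I must be careful that the candidate inner inverses are genuinely bounded and adjointable — but this is automatic since they are finite products of the bounded adjointable operators $S, T, S^{\dag}, T^{\dag}, (TS)^{\dag}, B$. Once the two algebraic identities above are verified, the equivalence follows, and then Xu--Sheng's theorem upgrades it to the closed-range statement; by Lemma~\ref{closedT*T} there is no extra hypothesis needed beyond the closedness of $Ran(S)$ and $Ran(T)$ already assumed.
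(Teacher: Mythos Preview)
Your proposal is correct and follows the same route as the paper: both directions use the same candidate inner inverses (namely $S\,V\,T$ for $T^{\dag}TSS^{\dag}$ when $V$ is a generalized inverse of $TS$, and $S^{\dag}\,B\,T^{\dag}$ for $TS$ when $B$ is a generalized inverse of $T^{\dag}TSS^{\dag}$), and the verifications reduce to the Moore--Penrose identities $SS^{\dag}S=S$, $TT^{\dag}T=T$ together with the defining equation $QP\,B\,QP=QP$ (where $P=SS^{\dag}$, $Q=T^{\dag}T$). The only cosmetic difference is that for the converse the paper introduces $W=PBQ$, notes $Q(1-W)P=0$ and hence $T(1-W)S=0$, while your sketch amounts to the equivalent one-line computation $TS\,(S^{\dag}BT^{\dag})\,TS = TQ\cdot PBQ\cdot PS = T(QPBQP)S = T(QP)S = TS$.
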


\begin{proof} Suppose first that $V$ is a generalized inverse
of $TS$. Then
\begin{eqnarray*}
T^{ \dag} T S S^{ \dag} (SVT) T^{ \dag} T S S^{ \dag} = T^{ \dag}
T \, ( S S^{ \dag} S) \, V \, (T \, T^{ \dag} T ) \, S S^{ \dag}
& = & T^{ \dag} T  S \, V \, T  S S^{ \dag}  \\
& = & T^{ \dag} TS S^{ \dag}.
\end{eqnarray*}
Similarly, $ SVT \, (T^{ \dag} T S S^{ \dag}) \, SVT = SVT $ and
so $SVT$ is a generalized inverse of $ T^{ \dag} TS S^{ \dag}$.
Conversely, suppose that $U \in \mathcal{L} (F) $ is a generalized
inverse of $ T^{ \dag} TS S^{ \dag}$. Let $P=S S^{ \dag}$ and
$Q=T^{ \dag} T$ are orthogonal projections onto $Ran(S)$ and
$Ker(T)^{ \perp}$, respectively, then $QPUQP=QP$. We set $W=PUQ$, then
$PWQ=W$ and $QWP=QP$. The later equality implies that
$Q(1-W)P=0$, that is, $1-W$ maps $Ran(P)=Ran(S)$ into
$Ker(Q)=Ker(T)$. Consequently, $T(1-W)S=0$. Hence,
$$TS \, (S^{ \dag} W T^{ \dag}) \,  TS = T P W Q S = TWS=TS .$$
On the other hand, $S^{ \dag} W T^{ \dag}= S^{ \dag} PUQ T^{
\dag}= S^{ \dag}S S^{ \dag} U T^{ \dag} T \, T^{ \dag}= S^{ \dag}
U T^{ \dag}$ which shows that $(S^{ \dag} W T^{ \dag}) \, TS \,
(S^{ \dag} W T^{ \dag}) = S^{ \dag} U T^{ \dag}= S^{ \dag} W T^{
\dag}$, i.e. $S^{ \dag} W T^{ \dag}$ is a generalized inverse of
$TS$. In particular, $TS$ has closed range if and only if $ T^{
\dag} TS S^{ \dag}$ has.
\end{proof}

\begin{lemma} \label{closed1}
Let $T \in  \mathcal{L}(E,F)$, then $T$ has closed range if and
only if $Ker(T)$ is orthogonally complemented in $E$ and $T$ is
bounded below on $Ker(T)^{\perp} $, i.e. $\| Tx \| \geq c \|x
\|$, for all $x \in Ker(T)^{\perp} $ for a certain positive
constant $c$.
\end{lemma}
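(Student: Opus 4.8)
The strategy is to mimic the Hilbert space argument, being careful about orthogonal complementation, which in Hilbert C*-modules is not automatic. For the forward direction, assume $Ran(T)$ is closed. Then, as recalled in the preliminaries via \cite[Theorem 3.2]{LAN}, $Ker(T)$ is orthogonally complemented in $E$ with complement $Ran(T^*)$, so the first assertion is immediate. For the lower bound, I would consider the restriction $T_0$ of $T$ to $Ker(T)^\perp = Ran(T^*)$. This $T_0$ is injective (if $x \in Ker(T)^\perp$ and $Tx = 0$ then $x \in Ker(T) \cap Ker(T)^\perp = \{0\}$) and has the same range as $T$, which is closed, hence $Ran(T_0)$ is a Hilbert $\mathcal A$-module in its own right and $T_0 \colon Ker(T)^\perp \to Ran(T)$ is a bijective bounded adjointable map between Hilbert $\mathcal A$-modules. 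By the open mapping theorem $T_0$ has a bounded inverse $T_0^{-1}$, and then $\|x\| = \|T_0^{-1} T_0 x\| \le \|T_0^{-1}\|\,\|Tx\|$ for all $x \in Ker(T)^\perp$; taking $c = \|T_0^{-1}\|^{-1}$ gives the claimed estimate.

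For the converse, assume $Ker(T)$ is orthogonally complemented, $E = Ker(T) \oplus Ker(T)^\perp$, and $\|Tx\| \ge c\|x\|$ on $Ker(T)^\perp$. I would show $Ran(T)$ is closed directly: take a sequence $Tx_n \to y$ in $F$. Writing $x_n = k_n + w_n$ with $k_n \in Ker(T)$ and $w_n \in Ker(T)^\perp$, we have $Tx_n = Tw_n$, so $(Tw_n)$ is Cauchy, and by the lower bound $\|w_n - w_m\| \le c^{-1}\|Tw_n - Tw_m\|$, so $(w_n)$ is Cauchy in the Banach space $Ker(T)^\perp$, converging to some $w$. By continuity $Tw = \lim Tw_n = y$, so $y \in Ran(T)$ and the range is closed.

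The only genuinely delicate point is the use of the open mapping theorem in the forward direction: one must know that $Ran(T)$, being a \emph{closed} submodule of the Hilbert $\mathcal A$-module $F$, is itself a Banach space, so that $T_0 \colon Ker(T)^\perp \to Ran(T)$ is a continuous bijection between Banach spaces and Banach's theorem applies to yield a bounded inverse. This is routine but is the step where the C*-module structure (as opposed to mere linear-algebraic manipulation) actually enters; everything else is a transcription of the classical proof. One could alternatively invoke the Moore-Penrose inverse $T^\dagger$ from the preliminaries, using $\|x\| = \|T^\dagger T x\| \le \|T^\dagger\|\,\|Tx\|$ for $x \in Ker(T)^\perp = Ran(T^\dagger T)$, which makes the forward direction essentially immediate, though it relies on the nontrivial result of Xu and Sheng that closed range implies bounded Moore-Penrose invertibility.
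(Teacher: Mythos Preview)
Your argument is correct in both directions. The forward implication via the open mapping theorem applied to the bijection $T_0 \colon Ker(T)^\perp \to Ran(T)$ between Banach spaces is sound (both sides are closed submodules, hence complete), and the converse via a Cauchy-sequence chase using the decomposition $E = Ker(T)\oplus Ker(T)^\perp$ is the standard and fully rigorous route. Your alternative via $T^\dagger$ is also valid and is, in fact, exactly the computation the paper uses one step later in Lemma~\ref{closed2}.

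The only comparison worth noting is that the paper does not prove this lemma at all: it simply invokes Proposition~1.3 of \cite{F-S}. So your write-up is not a different route so much as a self-contained proof where the paper opts for an external citation. What your approach buys is independence from \cite{F-S} at the cost of a short paragraph; what the paper's approach buys is brevity. Either is acceptable, and your argument would serve perfectly well as a replacement for the bare citation.
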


The statement directly follows from Proposition 1.3 of \cite{F-S}.

\begin{lemma} \label{closed2}
Let $T$ be a non-zero bounded adjointable operator in $
\mathcal{L}(E,F)$, then $T$ has closed range if and only if
$Ker(T)$ is orthogonally complemented in $E$ and
$$ \gamma (T)= {\rm inf} \{ \| Tx \| : x \in Ker(T)^{\perp}
\ {\rm and} \  \|x \|=1 \} > 0.$$ In this case, $ \gamma (T)= \|
T ^{ \dag} \| ^{-1}$ and  $ \gamma (T) = \gamma (T^*)$.
\end{lemma}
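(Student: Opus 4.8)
The plan is to read off the stated equivalence from Lemma~\ref{closed1} and then identify $\gamma(T)$ with $\|T^{\dag}\|^{-1}$ using the basic properties of the Moore--Penrose inverse recalled above. First I would note that the condition $\gamma(T)>0$ is merely a rephrasing of ``$T$ is bounded below on $Ker(T)^{\perp}$'': by homogeneity, $\gamma(T)=c>0$ is equivalent to $\|Tx\|\geq c\,\|x\|$ for every $x\in Ker(T)^{\perp}$. Hence the biconditional of the statement is exactly Lemma~\ref{closed1}, applied to the non-zero operator $T$.

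For the identity $\gamma(T)=\|T^{\dag}\|^{-1}$, assume $T$ has closed range, so $T^{\dag}\in\mathcal{L}(F,E)$ is bounded, $T^{\dag}T$ is the orthogonal projection of $E$ onto $Ran(T^{\dag})=Ker(T)^{\perp}$, and $T\,T^{\dag}$ is the orthogonal projection of $F$ onto $Ran(T)$. For $x\in Ker(T)^{\perp}$ with $\|x\|=1$ one has $x=T^{\dag}Tx$, hence $1\leq\|T^{\dag}\|\,\|Tx\|$; taking the infimum over such $x$ gives $\gamma(T)\geq\|T^{\dag}\|^{-1}$. For the reverse inequality I would take an arbitrary $y\in F$, set $x:=T^{\dag}y\in Ker(T)^{\perp}$, and use $Tx=T\,T^{\dag}y$ together with $\|Tx\|\leq\|y\|$ (as $T\,T^{\dag}$ is an orthogonal projection) and $\|Tx\|\geq\gamma(T)\,\|x\|=\gamma(T)\,\|T^{\dag}y\|$ (trivially when $x=0$); this yields $\gamma(T)\,\|T^{\dag}y\|\leq\|y\|$ for all $y$, i.e. $\|T^{\dag}\|\leq\gamma(T)^{-1}$, and combining the two estimates gives $\gamma(T)=\|T^{\dag}\|^{-1}$.

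Finally, for $\gamma(T)=\gamma(T^{*})$ I would recall that $T^{*}$ has closed range too (preliminaries), and check that $(T^{\dag})^{*}$ satisfies the four Moore--Penrose equations (\ref{MP inverse}) for $T^{*}$; by uniqueness this forces $(T^{*})^{\dag}=(T^{\dag})^{*}$, whence $\gamma(T^{*})=\|(T^{*})^{\dag}\|^{-1}=\|(T^{\dag})^{*}\|^{-1}=\|T^{\dag}\|^{-1}=\gamma(T)$, using $\|A^{*}\|=\|A\|$ in $\mathcal{L}(F,E)$.

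Everything here is routine; the only step needing a little care is the reverse estimate $\|T^{\dag}\|\leq\gamma(T)^{-1}$, where one must pass to $Ker(T)^{\perp}$ through the projection $T^{\dag}T$ and separate the degenerate vectors. The hypothesis $T\neq 0$ is precisely what guarantees $Ker(T)^{\perp}\neq\{0\}$ and $\|T^{\dag}\|>0$, so that both sides of $\gamma(T)=\|T^{\dag}\|^{-1}$ are genuinely finite and positive.
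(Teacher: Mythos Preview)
Your proposal is correct and follows essentially the same route as the paper: the biconditional is read off from Lemma~\ref{closed1}, the two inequalities for $\gamma(T)=\|T^{\dag}\|^{-1}$ are obtained exactly as you do (via $x=T^{\dag}Tx$ for one direction and $x=T^{\dag}y$ for the other), and $\gamma(T)=\gamma(T^{*})$ is deduced from $(T^{*})^{\dag}=(T^{\dag})^{*}$. Your write-up is in fact slightly more careful than the paper's in spelling out why $\gamma(T)>0$ is equivalent to being bounded below and in handling the degenerate case $x=0$.
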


\begin{proof} The first assertion follows directly from Lemma
\ref{closed1}. To prove the first equality, suppose $T$ has closed
range, $x \in Ker(T)^{\perp} = Ran( T^{ \dag} T)$ and $ \|x \|=1$,
then $ 1= \|x \|= \| T^{ \dag} T x \| \leq \| T^{ \dag} \| \, \|
Tx \|$, consequently, $ \| T ^{ \dag} \| ^{-1} \leq \gamma (T).$
Suppose $ x \in Ker(T)^{\perp}$ then $ \gamma (T) \|x \| \leq
\|Tx \| $. Suppose $w \in F$ and $ x = T^{ \dag} w$ then $ x \in
Ran( T^{ \dag})= Ker(T)^{\perp}$, hence,
$$  \gamma(T) \| T^{ \dag} w  \| \leq \|T \, T^{ \dag} w \| \leq
\| T \, T^{ \dag} \| \, \| w \| \leq \| w \|. $$ We therefore have
$ \gamma (T) \leq \| T ^{ \dag} \| ^{-1}.$ To establish the second
equality just recall that $T$ has closed range if and only if
$T^*$ has. It now follows from the first equality and the fact
$\|T^{* \, \dag} \|=\|T^{ \dag \, *} \|= \|T^{ \dag} \|.$
\end{proof}
%%%%%%%%%%%%%%%%%%%%%%%%%%%%%%%%%%%%%%%%%%%%%%%%%%%%%%%%%%%%%%%%%%%%%%%%%%%%
\section{Closeness of the range of the products}

Suppose $F$ is a Hilbert $ \mathcal{A}$-module and $T$  be a
bounded adjointable operator in the unital C*-algebra $
\mathcal{L}(F)$, then $ \sigma(T)$ and ${\rm acc}~ \sigma(T)$
denote the spectrum and the set of all accumulation points of $
\sigma(T)$, respectively. According to \cite[Theorem
2.4]{KOL/Drazin1} and \cite[Theorem 2.2]{Xu/Sheng}, a bounded
adjointable operator $T$ in $ \mathcal{L}(F)$ has closed range if
and only if $T$ has a Moore-Penrose inverse, if and only if $ 0
\notin {\rm acc}~ \sigma(T \, T^*)$, if and only if $ 0 \notin
{\rm acc}~ \sigma(T^*T)$. In particular, if $T$ is selfadjoint
then $T$ has closed range if and only if $ 0 \notin {\rm acc}~
\sigma(T)$. We use these facts in the proof of the following
results.
\begin{lemma} \label{closed5-5}Suppose $F$ is a Hilbert
$ \mathcal{A}$-module and $P, Q$ are orthogonal projections in $
\mathcal{L}(F)$. Then $P-Q$ has closed range if and only if $P+Q$
has closed range.
\end{lemma}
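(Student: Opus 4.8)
The plan is to reduce everything to spectral statements about self-adjoint operators, using the criterion recalled just before the lemma: a self-adjoint operator $R$ has closed range if and only if $0 \notin {\rm acc}~\sigma(R)$. The natural self-adjoint operators to look at are $P-Q$ and $P+Q-1$ (equivalently $2-P-Q$, since $1-(P-Q)$ and related shifts change the spectrum only by translation, and $0$ being or not being an accumulation point of $\sigma(P+Q)$ is what we ultimately want). So the first step is to replace the condition "$P+Q$ has closed range'' by "$0 \notin {\rm acc}~\sigma(P+Q)$'' and the condition "$P-Q$ has closed range'' by "$0 \notin {\rm acc}~\sigma(P-Q)$''.

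The key algebraic identity I would exploit is that $(P-Q)^2 + (P+Q-1)^2 = 1$, which one checks directly from $P^2=P$, $Q^2=Q$: expanding gives $P - PQ - QP + Q + P + PQ + QP + Q - 2P - 2Q + 1 = 1$. Hence $(P+Q-1)^2 = 1 - (P-Q)^2$, so the two self-adjoint operators $A := (P-Q)^2$ and $B := (P+Q-1)^2$ satisfy $A + B = 1$ and commute. Therefore $\sigma(B) = \{\, 1-\lambda : \lambda \in \sigma(A)\,\}$, and in particular $1 \in \sigma(B)$ (i.e. $0 \in \sigma(A)$) is an accumulation point of $\sigma(B)$ if and only if $0$ is an accumulation point of $\sigma(A)$. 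Now $0 \in {\rm acc}~\sigma(P-Q)$ iff $0 \in {\rm acc}~\sigma((P-Q)^2) = {\rm acc}~\sigma(A)$ (since $t \mapsto t^2$ is a homeomorphism near $0$ on the spectrum of a self-adjoint operator, the spectral mapping theorem gives the equivalence of $0$ being isolated in one versus the other), and similarly $0 \in {\rm acc}~\sigma(P+Q)$ iff... here I must be careful: $\sigma(P+Q) \subseteq [0,2]$ and $\sigma(P+Q-1) \subseteq [-1,1]$ with $0 \in \sigma(P+Q)$ corresponding to $-1 \in \sigma(P+Q-1)$, and $0$ isolated in $\sigma(P+Q)$ iff $-1$ isolated in $\sigma(P+Q-1)$ iff $1 \in \sigma((P+Q-1)^2)=\sigma(B)$ is isolated — using again that $t \mapsto t^2$ is a homeomorphism near $-1$. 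Stringing these equivalences together closes the argument: $P+Q$ has closed range $\iff 0 \notin {\rm acc}~\sigma(B)$ at the point $1$ $\iff 0 \notin {\rm acc}~\sigma(A)$ at the point $0$ $\iff P-Q$ has closed range.

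I expect the main obstacle to be the bookkeeping about \emph{which} point of the spectrum must be isolated and making sure the passage through the squaring map is legitimate at exactly that point; one has to note that for a self-adjoint $R$ with $\sigma(R)\subseteq[-1,1]$, the point $1$ is isolated in $\sigma(R)$ iff $1$ is isolated in $\sigma(R^2)$, because on $[-1,1]$ the only preimage of $1$ under $t\mapsto t^2$ near $1$ is $1$ itself (the point $-1$ maps to $1$ too, but that is handled symmetrically and the union $\{-1,1\}$ being isolated in $\sigma(R)$ is equivalent to $\{1\}$ isolated in $\sigma(R^2)$). A clean way to sidestep part of this is: $P+Q$ has closed range iff $0\notin{\rm acc}~\sigma(P+Q)$ iff $0\notin{\rm acc}~\sigma\big((P+Q)(2-P-Q)\big)$ — but $(P+Q)(2-P-Q) = (P+Q-1)^2 \cdot(\text{something})$... actually $2(P+Q)-(P+Q)^2 = 1-(P+Q-1)^2$, so $0\notin {\rm acc}~\sigma(P+Q)$ forces $0\in\sigma(P+Q)$ only in the degenerate case and in general one argues $0$ is isolated in $\sigma(P+Q)$ iff it is isolated in $\sigma(2(P+Q)-(P+Q)^2)=\sigma(1-(P+Q-1)^2)$ iff $1$ is isolated in $\sigma((P+Q-1)^2)$. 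Combined with the parallel computation $1-(P-Q)^2 = (P+Q-1)^2$ and "$0$ isolated in $\sigma(P-Q)$ iff $0$ isolated in $\sigma((P-Q)^2)$ iff $1$ isolated in $\sigma(1-(P-Q)^2)$'', both sides reduce to the identical statement "$1\notin{\rm acc}~\sigma\big((P+Q-1)^2\big)$'', and we are done. I would present it in that streamlined form.
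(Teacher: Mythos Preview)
Your identity $(P-Q)^2+(P+Q-1)^2=1$ is correct and yields one implication cleanly: if $0\notin{\rm acc}\,\sigma(P-Q)$ then $0\notin{\rm acc}\,\sigma\bigl((P-Q)^2\bigr)$, hence $1\notin{\rm acc}\,\sigma\bigl((P+Q-1)^2\bigr)$, hence neither $-1$ nor $1$ is an accumulation point of $\sigma(P+Q-1)$, and in particular $0\notin{\rm acc}\,\sigma(P+Q)$. The converse, however, has a genuine gap. From $0\notin{\rm acc}\,\sigma(P+Q)$ you obtain only $-1\notin{\rm acc}\,\sigma(P+Q-1)$; to reach $1\notin{\rm acc}\,\sigma\bigl((P+Q-1)^2\bigr)$ you must also know that $+1\notin{\rm acc}\,\sigma(P+Q-1)$, i.e.\ that $2\notin{\rm acc}\,\sigma(P+Q)$. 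Your phrase ``handled symmetrically'' and the asserted equivalence ``$0$ isolated in $\sigma(P+Q)$ iff $0$ isolated in $\sigma\bigl(2(P+Q)-(P+Q)^2\bigr)$'' assume exactly this: since $t\mapsto 2t-t^2$ sends \emph{both} $0$ and $2$ to $0$, the right-hand condition is equivalent to both $0$ and $2$ being non-accumulation points of $\sigma(P+Q)$, not merely $0$.

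The missing ingredient --- that $\sigma(P+Q-1)\setminus\{-1,0,1\}$ is symmetric about $0$ --- is true but needs its own argument. A short fix: verify the anticommutation $SD=-DS$ for $S=P+Q-1$ and $D=P-Q$ (a one-line computation), and combine it with $D^2=1-S^2$ to see that on the spectral subspace where $0<S^2<1$ the operator $D$ is invertible and conjugates $S$ to $-S$, forcing $\sigma(S)\cap(0,1)=-\bigl(\sigma(S)\cap(-1,0)\bigr)$. With that addition your argument closes. The paper takes a different route altogether: it relates both $\sigma(P-Q)$ and $\sigma(P+Q)$ to $\sigma(PQ)$ via the Koliha--Rako\v{c}evi\'c factorizations and shows each closed-range condition is equivalent to $1\notin{\rm acc}\,\sigma(PQ)$; along that path the relevant preimages can always be chosen on the correct side, so no separate symmetry step is required.
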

\begin{proof} Following the argument of Koliha and Rako\v{c}evi\'c
\cite{Koliha/Rakocevic2005}, for every $ \lambda \in \mathbb{C}$
we have
\begin{equation} \label{Koliha(2.4)}
( \lambda -1 +P) ( \lambda -(P-Q)) ( \lambda -1 + Q) = \lambda(
\lambda^2 -1 + PQ),
\end{equation}
\begin{equation} \label{Koliha(2.5)}
( \lambda -1 +P) ( \lambda -(P+Q)) ( \lambda -1 +Q) = \lambda((
\lambda -1)^2 -PQ).
\end{equation}
Using the above equations and the facts that $ \sigma(P) \subset
\{0,1 \}$ and $ \sigma(Q) \subset \{0,1 \}$, we obtain that
$Ran(P-Q)$ is closed if and only if $0 \notin {\rm acc}~
\sigma(P-Q)$, if and only if $1 \notin {\rm acc}~ \sigma(PQ)$, if
and only if $0 \notin {\rm acc}~ \sigma(P+Q)$, if and only if
$Ran(P+Q)$ is closed.
\end{proof}
\begin{lemma} \label{closed6}Suppose $F$ is a Hilbert
$ \mathcal{A}$-module and $P, Q$ are orthogonal projections in $
\mathcal{L}(F)$. Then the following conditions are equivalent:
\newcounter{cou001}
\begin{list}{(\roman{cou001})}{\usecounter{cou001}}
\item $PQ$ has closed range,
\item $1-P-Q$ has closed range,
\item $1-P+Q$ has closed range,
\item $1-Q+P$ has closed range.
\end{list}
\end{lemma}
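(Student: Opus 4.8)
The plan is to deduce all the equivalences from Lemma~\ref{closed5-5} together with the algebraic identity $PQ$-has-closed-range $\iff$ $QP$-has-closed-range, reducing everything to statements about sums and differences of projections. First I would observe that $1-P$ and $1-Q$ are again orthogonal projections in $\mathcal{L}(F)$, so Lemma~\ref{closed5-5} applies to them: $(1-P)-(1-Q)=Q-P$ has closed range if and only if $(1-P)+(1-Q)=2-P-Q$ has closed range. Since multiplication by the nonzero scalar $2$ does not affect closeness of range, and since scaling the argument $\lambda\mapsto\lambda/2$ in the spectral criterion $0\notin\mathrm{acc}\,\sigma(\cdot)$ is harmless, $2-P-Q$ has closed range exactly when $1-\tfrac12(P+Q)$ does; but this is cumbersome, so instead I would directly note that $1-P-Q = (1-P)-Q$ and $Q-P = (1-P)-(1-Q)$ differ only by replacing one projection summand with its complement, so a single application of Lemma~\ref{closed5-5} (to the pair $1-P,\,Q$) gives: $1-P-Q$ closed $\iff$ $(1-P)+Q$ closed, i.e.\ condition (ii) $\iff$ condition (iv). Applying Lemma~\ref{closed5-5} instead to the pair $P,\,1-Q$ gives $P-(1-Q)=P+Q-1$ closed $\iff$ $P+(1-Q)=1-Q+P$ closed; since $P+Q-1=-(1-P-Q)$ has closed range iff $1-P-Q$ does, this again yields (ii) $\iff$ (iv), and symmetrically (ii) $\iff$ (iii) by swapping the roles of $P$ and $Q$.

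It remains to link these to (i), that $PQ$ has closed range. Here I would use the key reduction that $PQ$ has closed range if and only if $(PQ)(PQ)^* = PQP$ has closed range (Lemma~\ref{closedT*T}), and $PQP$ is selfadjoint, so by the spectral criterion quoted before Lemma~\ref{closed5-5} this happens iff $0\notin\mathrm{acc}\,\sigma(PQP)$. Now I would invoke the identity~(\ref{Koliha(2.5)}) or a companion of~(\ref{Koliha(2.4)}): expanding $(1-P-Q)^2 = 1-P-Q-P+P^2+PQ-Q+QP+Q^2 = 1-P-Q+PQ+QP$, one sees that the spectral behaviour of $1-P-Q$ near $0$ is governed by that of $PQ+QP$ near $1$, hence by $PQ$ near its relevant spectral value; more cleanly, I would reuse the factorisation trick of Koliha--Rako\v{c}evi\'c directly for the operator $1-P-Q$, writing a product of the form $(\lambda-1+P)(\lambda-(1-P-Q))(\lambda-1+Q)$ as a scalar multiple of a polynomial in $PQ$, and read off that $0\notin\mathrm{acc}\,\sigma(1-P-Q) \iff 1\notin\mathrm{acc}\,\sigma(PQ) \iff 0\notin\mathrm{acc}\,\sigma(PQP)\iff PQ$ has closed range.

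I expect the main obstacle to be the bookkeeping in step two: getting the precise polynomial identity relating $1-P-Q$ to $PQ$ so that the spectral points $0$ and $1$ correspond correctly, and making sure that the "accumulation point at $0$'' translates faithfully through each factorisation (the outer factors $\lambda-1+P$ and $\lambda-1+Q$ are invertible for $\lambda$ near the relevant values since $\sigma(P),\sigma(Q)\subset\{0,1\}$, which is what legitimises the transfer of accumulation points). Once that identity is in hand, the rest is a routine chain of "iff''s: (i) $\iff$ (ii) from the spectral analysis of $1-P-Q$ via $PQP$, and (ii) $\iff$ (iii) $\iff$ (iv) from two applications of Lemma~\ref{closed5-5} to the projection pairs $(1-P,Q)$ and $(P,1-Q)$ as above, together with the trivial fact that an operator has closed range iff its negative does.
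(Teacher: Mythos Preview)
Your approach coincides with the paper's: establish (i)$\Leftrightarrow$(ii) by translating closedness of range into the spectral condition $0\notin\mathrm{acc}\,\sigma(\cdot)$ and invoking the Koliha--Rako\v{c}evi\'c identity~(\ref{Koliha(2.5)}), and obtain (ii)$\Leftrightarrow$(iii)$\Leftrightarrow$(iv) from Lemma~\ref{closed5-5} applied with one projection replaced by its complement. Two minor bookkeeping slips to clean up. First, applying Lemma~\ref{closed5-5} to the pair $(1-P,Q)$ yields $(1-P)+Q=1-P+Q$, which is condition (iii), not (iv) as you wrote; the pair $(P,1-Q)$ is what gives (ii)$\Leftrightarrow$(iv). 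Second, in your spectral chain it should read $0\notin\mathrm{acc}\,\sigma(PQ)$ rather than $1\notin\mathrm{acc}\,\sigma(PQ)$: identity~(\ref{Koliha(2.5)}) gives, for $\lambda\neq0,1$, that $\lambda\in\sigma(P+Q)\Leftrightarrow(\lambda-1)^2\in\sigma(PQ)$, so $1$ is an accumulation point of $\sigma(P+Q)$ exactly when $0$ is one of $\sigma(PQ)$, and then $\sigma(PQ)\setminus\{0\}=\sigma(PQP)\setminus\{0\}$ closes the loop. It is also simpler to use~(\ref{Koliha(2.5)}) for $P+Q$ directly and pass to $1-P-Q$ via the spectral shift $\sigma(1-P-Q)=1-\sigma(P+Q)$, rather than to seek a separate factorisation of $\lambda-(1-P-Q)$; the outer factors in~(\ref{Koliha(2.5)}) are tuned to $\lambda-(P+Q)$ and your modified product does not collapse as cleanly.
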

\begin{proof} Suppose $ \lambda \in
\mathbb{C} \setminus \{ 0,1 \}$. In view of the equation
(\ref{Koliha(2.5)}), we conclude that $ \lambda \in \sigma (P+Q)
$ if and only if $( \lambda-1)^2  \in \sigma (PQ) $.

The above fact together with Remark 1.2.1 of \cite{MUR} imply
that $PQ$ has closed range if and only if $0 \notin {\rm acc}~
\sigma(PQP)$, if and only if $0 \notin {\rm acc}~ \sigma(P^2Q)$,
if and only if $ 1 \notin {\rm acc}~ \sigma(P+Q)$, if and only if
$ 0 \notin {\rm acc}~ \sigma (1-P-Q)$, if and only if $1-P-Q$ has
closed range. This proves the equivalence of (i) and (ii). The
statements (ii), (iii) and (iv) are equivalent by Lemma
\ref{closed5-5}.
\end{proof}
%%%%%%%%%%%%%%%%%%%%%%%%%%%%%%%%%%%%%%%%%%%%%%%%%%%%%%%%%%%%%%%%%%%%%%%
\begin{remark} \label{closed7} Suppose $E$, $F$
are two Hilbert $ \mathcal{A}$-modules then the set of all
ordered pairs of elements $E \oplus F$ from $E$ and $F$ is a
Hilbert $\mathcal{A}$-module with respect to the $\mathcal{
A}$-valued inner product $\langle
(x_{1},y_{1}),(x_{2},y_{2})\rangle= \langle
x_{1},x_{2}\rangle_{E}+\langle y_{1},y_{2}\rangle _{F}$, cf.
\cite[Example 2.14]{R-W}. In particular, it can be easily seen
that $L$ is a closed submodule of $F$ if and only if $L \oplus \{
0 \}$ is a closed submodule of $F \oplus F$.
\end{remark}
\begin{lemma} \label{closed33}
Suppose $P$ and $Q$ are orthogonal projections on a Hilbert $
\mathcal{A}$-module $F$ then the following conditions are
equivalent:
\begin{list}{(\roman{cou001})}{\usecounter{cou001}}
\item $PQ$ has closed range,
\item $Ker(P)+Ran(Q)$ is an orthogonal summand,
\item $Ker(Q)+Ran(P)$ is an orthogonal summand.
\end{list}
\end{lemma}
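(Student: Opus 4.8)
The plan is to realize the submodule $Ker(P)+Ran(Q)$ as the exact range of a single adjointable operator on the orthogonal direct sum $F\oplus F$ (Remark \ref{closed7}) and then feed this operator into Lemma \ref{closedT*T} and Lemma \ref{closed6}. Concretely, I would define $R\in\mathcal{L}(F\oplus F,F)$ by $R(x,y)=(1-P)x+Qy$; one checks directly that its adjoint is $R^{*}z=\big((1-P)z,\,Qz\big)$ and that $RR^{*}=(1-P)+Q=1-P+Q$. Since $P$ is a projection, $Ran(1-P)=Ker(P)$, so $Ran(R)=Ran(1-P)+Ran(Q)=Ker(P)+Ran(Q)$ \emph{as sets} (not merely up to closure).

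Next I would establish the equivalence (i)$\Leftrightarrow$(ii). If $PQ$ has closed range, then by Lemma \ref{closed6} the operator $1-P+Q=RR^{*}$ has closed range, hence by Lemma \ref{closedT*T} so does $R$; but the range of an adjointable operator with closed range is orthogonally complemented (the properties of closed-range operators recalled at the beginning of Section 2), so $Ker(P)+Ran(Q)=Ran(R)$ is an orthogonal summand. (In this case Lemma \ref{closedT*T} also gives $Ran(R)=Ran(RR^{*})$, i.e. $Ker(P)+Ran(Q)=Ran(1-P+Q)$, an explicit description of the summand.) Conversely, if $Ker(P)+Ran(Q)$ is an orthogonal summand then in particular it is closed, so $Ran(R)$ is closed, so $RR^{*}=1-P+Q$ has closed range by Lemma \ref{closedT*T}, and Lemma \ref{closed6} returns that $PQ$ has closed range. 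The equivalence (i)$\Leftrightarrow$(iii) is obtained verbatim with $R'\in\mathcal{L}(F\oplus F,F)$ defined by $R'(x,y)=(1-Q)x+Py$, for which $Ran(R')=Ker(Q)+Ran(P)$ and $R'R'^{*}=1-Q+P$, the latter having closed range if and only if $PQ$ does by Lemma \ref{closed6}(iv).

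The only point that needs care is that (ii) and (iii) assert that a submodule is an orthogonal summand, not merely that it is a closed submodule, so I should be explicit that the delicate implication (closedness $\Rightarrow$ orthogonal complementedness) comes for free here precisely because $Ker(P)+Ran(Q)$ is genuinely the range of the adjointable operator $R$, and closed-range adjointable operators on Hilbert C*-modules have orthogonally complemented ranges. Everything else is bookkeeping — verifying that $R$ is adjointable with the claimed adjoint, computing $RR^{*}$, and identifying $Ran(R)$ with $Ker(P)+Ran(Q)$ — while the substantive spectral input has already been isolated in Lemmas \ref{closed5-5} and \ref{closed6}.
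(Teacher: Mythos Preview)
Your proof is correct and essentially identical to the paper's: the paper packages your row operator $R$ as the $2\times 2$ block matrix $T\in\mathcal{L}(F\oplus F)$ with first row $(1-P,\,Q)$ and zero second row, then uses Remark \ref{closed7} to identify submodules of $F$ with submodules of the first summand of $F\oplus F$, but the substantive content---$Ran(R)=Ker(P)+Ran(Q)$, $RR^{*}=1-P+Q$, and then Lemma \ref{closedT*T} combined with Lemma \ref{closed6}---is exactly the same. Your version is in fact marginally cleaner, since mapping directly into $F$ rather than $F\oplus F$ spares you the detour through Remark \ref{closed7}.
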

\begin{proof}Suppose
$$ T= \begin{pmatrix}  1-P& Q \\ 0 & 0
\end{pmatrix} \in \mathcal{L}(F \oplus F).$$
Then $Ran(T)=(Ran(1-P) + Ran(Q)) \oplus \{0 \}$ and $Ran(T \, T
^*)=Ran(1-P+Q) \oplus \{ 0\}$. Using Lemmata \ref{closedT*T},
\ref{closed6} and Remark \ref{closed7}, we infer that $PQ$ has
closed range if and only if $1-P+Q$ has closed range, if and only
if $Ran(T \, T ^*)=Ran(1-P+Q) \oplus \{ 0\}$ is closed, if and
only if $Ran(T)=(Ran(1-P) + Ran(Q)) \oplus \{0 \}$ is closed, if
and only if $Ran(1-P) + Ran(Q)$ is closed. In particular,
$Ran(1-P+Q) = Ran(1-P) + Ran(Q)$ is an orthogonal summand. This
proves that the conditions (i) and (ii) are equivalent. Now,
consider the matrix operator
$$\tilde{T}= \begin{pmatrix}  1-Q & P \\ 0 & 0
\end{pmatrix} \in \mathcal{L}(F \oplus F).$$
A similar argument shows that $PQ$ has closed range if and only if
$Ran(1-Q+P) = Ran(1-Q) + Ran(P)$ is closed which shows that
conditions (i) and (iii) are equivalent.
\end{proof}
%%%%%%%%%%%%%%%%%%%%%%%%%%%%%%%%%%%%%%%%%%%%%%%%%%%%%%%%%%%%%%%%%%
%%%%%%%%%%%%%%%%%%%%%%%%%%%%%%%%%%%%%%%%%%%%%%%%%%%%%%%%%%%%%%%%%%
Suppose $M$ and $N$ are closed submodule of a Hilbert $
\mathcal{A}$-module $E$ and $P_{M}$ and $P_{N}$ are orthogonal
projection onto $M$ and $N$, respectively. Then $P_M \, P_N =
P_M$ if and only if $P_N \, P_M = P_M$, if and only if $ M
\subset N$. Beside these, the following statements are equivalent
\begin{itemize}
\item $P_{M}$ and $P_{N}$ commute, i.e. $P_M \, P_N=P_N \, P_M$,
\item $P_M \, P_N = P_{M \cap N}$,
\item $P_M \, P_N$ is an orthogonal projection,
\item $P_{M^{ \perp }}$ and $P_{N}$ commute,
\item $P_{N^{ \perp }}$ and $P_{M}$ commute,
\item $P_{M^{ \perp }}$ and $P_{N^{ \perp}}$ commute,
\item $M= M\cap N + M \cap N^{ \perp}$.
\end{itemize}

\begin{proposition} \label{closed3}
Suppose $P$ and $Q$ are orthogonal projections on a Hilbert $
\mathcal{A}$-module $F$ and $ \overline{Ker(Q)+Ran(P)} $ is an
orthogonal summand in $F$. If $R$ is the orthogonal projection
onto the closed submodule $ \overline{Ker(Q)+Ran(P)} $ and $PQ
\neq 0$ then
\begin{equation} \label{PQ1}
\gamma(PQ)^2 + \|(1-P)QR \|^2 \geq 1.
\end{equation}
\end{proposition}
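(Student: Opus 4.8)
The plan is to reduce the estimate to a pointwise lower bound on unit vectors in $Ker(PQ)^{\perp}$ and then pass to an infimum. First I would identify the submodule $Ran(1-R)$: since $\overline{Ker(Q)+Ran(P)}$ is an orthogonal summand and $R$ is the projection onto it, $1-R$ is the orthogonal projection onto $(Ker(Q)+Ran(P))^{\perp}=Ker(Q)^{\perp}\cap Ran(P)^{\perp}=Ran(Q)\cap Ker(P)$, using that $Ker(Q)^{\perp}=Ran(Q)$ and $Ran(P)^{\perp}=Ker(P)$ because $P,Q$ are orthogonal projections. Next I would record two elementary inclusions, $Ker(Q)\subseteq Ker(PQ)$ and $Ran(1-R)=Ran(Q)\cap Ker(P)\subseteq Ker(PQ)$, and take orthogonal complements to obtain $Ker(PQ)^{\perp}\subseteq Ker(Q)^{\perp}=Ran(Q)$ and $Ker(PQ)^{\perp}\subseteq Ran(1-R)^{\perp}=Ran(R)$. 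Hence every $x\in Ker(PQ)^{\perp}$ satisfies $Qx=x$ and $Rx=x$.

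Then, for a unit vector $x\in Ker(PQ)^{\perp}$, the relation $Qx=x$ gives $PQx=Px$, while $Qx=x=Rx$ gives $(1-P)x=(1-P)Qx=(1-P)QRx$. The heart of the argument is the C*-module Pythagoras identity $\langle x,x\rangle=\langle Px,Px\rangle+\langle (1-P)x,(1-P)x\rangle$, the cross terms vanishing since $P(1-P)=0$ and $P=P^{*}$; applying the triangle inequality to the two positive summands on the right yields $1=\|x\|^{2}=\|\langle x,x\rangle\|\leq\|Px\|^{2}+\|(1-P)x\|^{2}$. Combining this with $\|(1-P)x\|=\|(1-P)QRx\|\leq\|(1-P)QR\|$ gives $\|PQx\|^{2}=\|Px\|^{2}\geq 1-\|(1-P)QR\|^{2}$.

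Finally I would take the infimum over all unit $x\in Ker(PQ)^{\perp}$. This set is nonempty: $PQ\neq 0$ forces $(PQ)^{*}\neq 0$, hence $\{0\}\neq Ran((PQ)^{*})\subseteq Ker(PQ)^{\perp}$, and a nonzero Hilbert C*-module contains a unit vector. Since $\gamma(PQ)^{2}=\inf\{\|PQx\|^{2}: x\in Ker(PQ)^{\perp},\ \|x\|=1\}$ by the definition recorded in Lemma~\ref{closed2}, the pointwise bound immediately gives $\gamma(PQ)^{2}\geq 1-\|(1-P)QR\|^{2}$, which is (\ref{PQ1}). The one genuinely non-formal point — and the place where the proof really uses the Hilbert C*-module setting rather than that of Hilbert spaces — is the passage from the $\mathcal{A}$-valued Pythagoras identity to the scalar inequality $\|Px\|^{2}+\|(1-P)x\|^{2}\geq\|x\|^{2}$: over a Hilbert space this would be an equality, and it is precisely the slack here that is responsible for getting an inequality rather than an identity in the conclusion.
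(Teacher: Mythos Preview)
Your proof is correct and follows essentially the same line as the paper's: both establish that any unit vector $x\in Ker(PQ)^{\perp}$ satisfies $Qx=x$ and $Rx=x$, then invoke the $\mathcal{A}$-valued Pythagoras identity $\langle x,x\rangle=\langle Px,Px\rangle+\langle(1-P)x,(1-P)x\rangle$, pass to norms via the triangle inequality, and take the infimum. The only real difference is in the setup: the paper shows directly that $QR$ is a projection (from $(1-Q)R=1-Q$) and that $Ker(PQ)^{\perp}=\overline{Ran(QP)}=Ran(QR)$, whereas you obtain the needed inclusions $Ker(PQ)^{\perp}\subseteq Ran(Q)\cap Ran(R)$ by taking orthogonal complements of $Ker(Q)\subseteq Ker(PQ)$ and $Ran(1-R)\subseteq Ker(PQ)$. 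Your route is slightly more economical for this proposition alone; the paper's identification $Ker(PQ)^{\perp}=Ran(QR)$ yields the extra fact that $Ker(PQ)^{\perp}$ is orthogonally complemented, which is what is actually used in the subsequent Corollary~\ref{closed333}.
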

\begin{proof}
The inclusion $ Ker(Q) \subset \overline{Ker(Q)+Ran(P)}$ implies
that the orthogonal projection $1-Q$ onto $Ker(Q)$ satisfies
$(1-Q)R=R(1-Q)=1-Q$, consequently, $QR$ is an orthogonal
projection and $Ran(QR)$ is orthogonally complemented in $F$.
Since $$Ran(QP) \subset Ran(QR) \subset \overline{Ran(QP)},$$ we
have $ \overline{Ran(QP)}= Ran(QR)$ and so $ \overline{Ran(QP)}$
is orthogonally complemented. Therefore, $Ker(PQ)^{ \, \perp}=
Ran(QR).$ Suppose $x \in Ker(PQ)^{ \perp} \subset Ran(Q)$ and
$\|x \|=1$. Then, since $x=QR \, x=Qx$, we have
\begin{eqnarray*}
\| PQ \,x \|^2 + \|(1-P)QR \|^2 & \geq & \| PQ \,x \|^2 + \|(1-P)Q
\,x \|^2  \\
& \geq &  \| \langle PQ \,x, PQ \,x \rangle + \langle (1-P)Q \,x,
(1-P)Q \,x \rangle \| \\
& = & \| \langle Qx,Qx \rangle \|= \| Qx \|^2=1.
\end{eqnarray*}
By definition, the infimum of $\| PQ \,x \|$ is $ \gamma (PQ)$.
Therefore, $ \gamma (PQ)^2 + \|(1-P)QR \|^2 \geq 1$.
\end{proof}

Note that as we set $\mathcal{A}= \mathbb{C}$ i.e. if we take
$F$ to be a Hilbert space, the inequality (\ref{PQ1})
changes to an equality. In view of this notification, the following problem
arises in the framework of Hilbert C*-modules.
\begin{problem}\label{problem-closed}
Suppose $P$ and $Q$ are orthogonal projections on a Hilbert $
\mathcal{A}$-module $F$ and $ \overline{Ker(Q)+Ran(P)} $ is an
orthogonal summand in $F$. If $R$ is the orthogonal projection
onto the closed submodule $ \overline{Ker(Q)+Ran(P)} $ and $PQ
\neq 0$ then characterize those
C*-algebras $ \mathcal{A}$ for which the following equality holds:
\begin{equation} \label{PQ1-1}
\gamma(PQ)^2 + \|(1-P)QR \|^2 =1.
\end{equation}
\end{problem}

To solve the problem, it might be useful to know that
$ \gamma (PQ) \leq \|PQ \, x \|$ for all
$x \in Ker(PQ)^{ \perp} \subset Ran(Q)$ of
norm $ \|x \|=1$, therefore $$ \gamma (PQ)^2 + \|(1-P)Q \, x \|^2   \leq  \| PQ \,x \|^2 +
\|(1-P)Q \,x \|^2  =  \| Px \|^2 + \|(1-P)x \|^2 .$$

%%%%%%%%%%%%%%%%%%%%%%%%%%%%%%%%%%%%%%%%%%%%%%%%%%%%%%%%%%%%%%%%%%%%%%%%%
\begin{corollary} \label{closed333}
Suppose $P$ and $Q$ are orthogonal projections on a Hilbert $
\mathcal{A}$-module $F$. If $ \delta = \|(1-P)QR \| < 1$ and $R$ is
the orthogonal projection onto the orthogonal summand
$\overline{Ker(Q)+Ran(P)}$ then $PQ$ has closed range.
\end{corollary}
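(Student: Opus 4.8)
The plan is to read off the conclusion from Proposition \ref{closed3} and Lemma \ref{closed2} with essentially no extra work. First I would dispose of the degenerate case: if $PQ = 0$, then $Ran(PQ) = \{0\}$ is trivially closed, so from now on assume $PQ \neq 0$. The remaining hypotheses of the corollary---that $\overline{Ker(Q)+Ran(P)}$ is an orthogonal summand with associated orthogonal projection $R$---are exactly the hypotheses of Proposition \ref{closed3}.

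Applying Proposition \ref{closed3} gives $\gamma(PQ)^2 + \|(1-P)QR\|^2 \geq 1$, that is, $\gamma(PQ)^2 \geq 1 - \delta^2$. Since $\delta = \|(1-P)QR\| < 1$ by assumption, $1-\delta^2 > 0$ and hence $\gamma(PQ) \geq \sqrt{1-\delta^2} > 0$. By Lemma \ref{closed2}, in order to conclude that $PQ$ has closed range it then suffices to know in addition that $Ker(PQ)$ is orthogonally complemented in $F$.

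That last point is the only thing left to verify, and it is in fact already contained in the proof of Proposition \ref{closed3}: there one shows $(1-Q)R = R(1-Q) = 1-Q$, hence $QR$ is an orthogonal projection, $\overline{Ran(QP)} = Ran(QR)$ is orthogonally complemented, and $Ker(PQ)^{\perp} = Ran(QR)$; consequently $F = Ker(PQ) \oplus Ran(QR)$. Thus $Ker(PQ)$ is orthogonally complemented, $\gamma(PQ) > 0$, and Lemma \ref{closed2} yields that $Ran(PQ)$ is closed. The main (and rather minor) obstacle is therefore purely expository: making sure we are entitled to reuse these intermediate identities from the proof of Proposition \ref{closed3}. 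If one prefers a self-contained argument, one can instead re-derive $F = \overline{Ran(QP)} \oplus Ker(PQ)$ on the spot from the chain $Ran(QP) \subset Ran(QR) \subset \overline{Ran(QP)}$ together with the fact that $Ran(QR)$ is orthogonally complemented, using the identity $Ker(PQ) = Ran(QP)^{\perp} = Ran(QR)^{\perp}$.
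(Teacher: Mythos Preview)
Your proof is correct and follows essentially the same route as the paper: dispose of the trivial case $PQ=0$, invoke Proposition~\ref{closed3} (and its proof) to obtain both $\gamma(PQ)^2 \geq 1-\delta^2 > 0$ and the orthogonal complementability of $Ker(PQ)$, and then conclude via Lemma~\ref{closed2}. The paper's version is terser but identical in substance; your added remark explicitly justifying $F = Ker(PQ) \oplus Ran(QR)$ via $Ker(PQ) = Ran(QP)^{\perp} = Ran(QR)^{\perp}$ is a welcome clarification.
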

\begin{proof}
Suppose $PQ \neq 0$ (in the case $PQ=0$ the result is clear).
According to Proposition \ref{closed3}
and its proof, $Ker(PQ)^{ \, \perp}=
Ran(QR)$ is orthogonally complimented and
$ \gamma(PQ)^2 \geq 1- \delta^{ \, 2} > 0$.
Therefore, $PQ$ has closed range by Lemma \ref{closed2}.
\end{proof}

%%%%%%%%%%%%%%%%%%%%%%%%%%%%%%%%%%%%%%%%%%%%%%%%%%%%%%%%%%%%%%%%%%%%%%%%
Two different concepts of angle between subspaces of a Hilbert
space was first introduced by Dixmier and Friedrichs, see
\cite{Dixmier1949, Friedrichs, Arias-Gonzalez} and the excellent
survey by Deutsch \cite{Deutsch/angle} for more historical notes
and information. We generalized Dixmier's definition for the
angle between two submodules of a Hilbert C*-module.

\begin{definition} \label{closed4}
The Dixmier (or minimal) angle between submodules $M$ and $N$ of a
Hilbert C*-module $E$ is the angle $ \alpha_0 (M,N)$ in $[0, \pi
/2]$ whose cosine is defined by $$c_0(M,N)= {\rm sup} \{  \|
\langle x,y \rangle \|  :  x \in M, \ \|x \| \leq 1 \, , \ y \in
N, \ \|y \| \leq 1 \}.$$
\end{definition}

Suppose $M$ and $N$ are submodule of a Hilbert C*-module $E$, then
$(M+N)^{ \perp}=M^{ \perp} \cap N^{ \perp}$. In particular, if $
\overline{M+N}$ is orthogonally complemented in $E$ then
$$(M^{ \perp} \cap N^{ \perp})^{ \perp} = (M+N)^{ \perp \, \perp}= \overline{M+N}.$$

\begin{theorem} \label{closed5}
Suppose $S \in \mathcal{L}(E,F)$ and $T \in \mathcal{L}(F,G)$ are
bounded adjointable operators with closed range. Then the following
three conditions are equivalent:
\begin{list}{(\roman{cou001})}{\usecounter{cou001}}
\item $TS$ has closed range,
\item $Ker(T)+Ran(S)$ is an orthogonal summand in $F$,
\item $Ker(S^*)+Ran(T^*)$ is an orthogonal summand in $F$.
\end{list}
Furthermore, if $c_0 ( Ran(S), Ker(T) \cap [Ker(T) \cap
Ran(S)]^{\perp} )< 1$ and $\overline{Ker(S^*)+Ran(T^*)}$ is an
orthogonal summand then $TS$ has closed range.
\end{theorem}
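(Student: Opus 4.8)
The plan is to reduce everything to the projection case already settled in Lemma~\ref{closed33}, via the localization device of Proposition~\ref{closed0}. Set $P = SS^{\dag}$, the orthogonal projection onto $Ran(S)$, and $Q = T^{\dag}T$, the orthogonal projection onto $Ker(T)^{\perp} = Ran(T^*)$. By Proposition~\ref{closed0}, $TS$ has closed range if and only if $T^{\dag}TSS^{\dag} = QP$ has closed range, which by Lemma~\ref{closed33} happens if and only if $Ker(Q) + Ran(P)$ is an orthogonal summand, if and only if $Ker(P) + Ran(Q)$ is an orthogonal summand. Now $Ker(Q) = Ker(T^{\dag}T) = Ker(T)$ and $Ran(P) = Ran(S)$, so the second condition reads exactly ``$Ker(T) + Ran(S)$ is an orthogonal summand''; likewise $Ker(P) = Ker(SS^{\dag}) = Ker(S^*)$ (since $Ran(S)^{\perp} = Ker(S^*)$) and $Ran(Q) = Ran(T^*)$, so the third condition reads ``$Ker(S^*) + Ran(T^*)$ is an orthogonal summand''. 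This establishes the equivalence of (i), (ii), (iii). I would be slightly careful to record the identifications $Ker(T^{\dag}T) = Ker(T)$ and $Ran(SS^{\dag}) = Ran(S)$, both of which are among the Moore--Penrose properties listed in Section~2.

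For the final sufficient condition, I would apply Corollary~\ref{closed333} to the pair $(P, Q)$ defined above. Let $R$ be the orthogonal projection onto $\overline{Ker(Q) + Ran(P)} = \overline{Ker(T) + Ran(S)}$; by hypothesis this is an orthogonal summand (note $\overline{Ker(S^*)+Ran(T^*)}$ being an orthogonal summand is equivalent to $\overline{Ker(T)+Ran(S)}$ being one, since for the projections $P,Q$ one has $\overline{Ran(P)+Ker(Q)}$ and $\overline{Ran(Q)+Ker(P)}$ orthogonally complemented simultaneously --- this should follow from Lemma~\ref{closed33} applied after replacing closed range of $QP$ by the weaker ``$\overline{Ran(QP)}$ orthogonally complemented'', or more directly from the seven-fold equivalence box preceding Proposition~\ref{closed3}). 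By Corollary~\ref{closed333}, it suffices to show $\|(1-P)QR\| < 1$. The key point is to translate the Dixmier-angle hypothesis: one has $Ker(T)\cap[Ker(T)\cap Ran(S)]^{\perp}$ is the range of $(1-Q) - (1-Q)\wedge P$ in the appropriate sense, and $\|(1-P)QR\|$ measures exactly the cosine of the Dixmier angle between $Ran(Q)\cap(\text{range of }R)$ restricted suitably and $Ran(P)^{\perp}$; more concretely I expect the identity (or inequality) $\|(1-P)QR\| \le c_0\big(Ran(S),\, Ker(T)\cap[Ker(T)\cap Ran(S)]^{\perp}\big)$, after which the hypothesis $c_0 < 1$ closes the argument.

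The main obstacle is precisely this last translation: showing that $\|(1-P)QR\|$ is controlled by $c_0(Ran(S), Ker(T)\cap[Ker(T)\cap Ran(S)]^{\perp})$. In the Hilbert-space setting this is Bouldin's computation, but in the C*-module setting one cannot freely decompose vectors, so I would argue as follows. Write $Ran(P) = Ran(S)$ and note $QR$ is the orthogonal projection onto $\overline{Ran(QP)} = Ran(QR)$ (this is shown inside the proof of Proposition~\ref{closed3}). For $x$ in the unit ball of $Ran(QR) \subset Ran(Q)$, we have $\|(1-P)Qx\|^2 = \|(1-P)x\|^2 = \|x\|^2 - \|Px\|^2$, so $\|(1-P)QR\| < 1$ is equivalent to $\gamma(PQ|_{Ran(QR)}) > 0$, i.e. to a lower bound $\|Px\| \ge c\|x\|$ on that submodule. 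The submodule $Ker(T)\cap[Ker(T)\cap Ran(S)]^{\perp}$ is $Ran\big((1-Q)(1-PR_0)\big)$ for $R_0$ the projection onto $\overline{Ker(T)\cap Ran(S)}$, and a direct estimate using the definition of $c_0$ --- testing $\|\langle Px, y\rangle\|$ against unit vectors $y$ in that submodule, together with the fact that $(1-P)x$ for $x\in Ran(QR)$ lies (up to closure) in $Ker(T)\cap[Ker(T)\cap Ran(S)]^{\perp}$ --- should yield $1 - \|Px\|^2 = \|(1-P)x\|^2 \le c_0^2\,\|x\|^2$, hence $\gamma(PQR)^2 \ge 1 - c_0^2 > 0$. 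I would then invoke Lemma~\ref{closed2} (or Corollary~\ref{closed333} directly) to conclude $QP$, and therefore $TS$, has closed range. The delicate bookkeeping is in verifying that $(1-P)x$ really does land in the stated intersection submodule and that no orthogonal-complementation hypothesis is silently needed beyond the one assumed; I would handle that using the equivalences in the projection box (commuting projections, $M = M\cap N + M\cap N^{\perp}$) applied to $P$ restricted to $Ran(R)$.
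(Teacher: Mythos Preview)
Your proof of the equivalence (i)$\Leftrightarrow$(ii)$\Leftrightarrow$(iii) is correct and matches the paper's: reduce via Proposition~\ref{closed0} to the product of the two projections, then invoke Lemma~\ref{closed33}. (The paper simply chooses the opposite labeling, $P=T^{\dag}T$ and $Q=SS^{\dag}$, but the argument is the same.)

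The second part, however, has two genuine gaps, both caused by your labeling choice. With your $P=SS^{\dag}$, $Q=T^{\dag}T$, the submodule $\overline{Ker(Q)+Ran(P)}$ appearing in Corollary~\ref{closed333} is $\overline{Ker(T)+Ran(S)}$, whereas the hypothesis of the theorem concerns $\overline{Ker(S^*)+Ran(T^*)}$. You assert these are orthogonal summands simultaneously, but neither Lemma~\ref{closed33} (which deals with the \emph{non}-closure $Ker(P)+Ran(Q)$ being a summand, equivalently $PQ$ closed-range) nor the commuting-projections list before Proposition~\ref{closed3} gives this; in a general Hilbert C*-module there is no reason the closures of $Ker(T)+Ran(S)$ and $Ker(S^*)+Ran(T^*)$ should be complemented together. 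Secondly, your computation $\|(1-P)x\|^2=\|x\|^2-\|Px\|^2$ is Pythagoras' equality, which the paper explicitly warns fails for C*-valued inner products: one only has $\langle x,x\rangle=\langle Px,Px\rangle+\langle(1-P)x,(1-P)x\rangle$ in~$\mathcal A$, and the norm of a sum of positives is not the sum of the norms. So the chain leading to $1-\|Px\|^2\le c_0^2$ breaks down.

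The paper's fix is simply to swap your $P$ and $Q$. With $P=T^{\dag}T$, $Q=SS^{\dag}$ one has $\overline{Ker(Q)+Ran(P)}=\overline{Ker(S^*)+Ran(T^*)}$, so the hypothesis furnishes $R$ directly and Corollary~\ref{closed333} applies without any auxiliary equivalence. Moreover, since $1-P$ and $Q$ then commute with $R$ (because $Ran(P)\subset Ran(R)$ and $Ker(Q)\subset Ran(R)$), one gets $(1-P)R$ to be the projection onto $M=Ker(T)\cap[Ker(T)\cap Ran(S)]^{\perp}$, and a direct operator-norm computation
\[
\|(1-P)QR\|=\|Q(1-P)R\|=\sup\{\|\langle (1-P)Rx,\,Qy\rangle\|:\|x\|,\|y\|\le1\}=c_0(M,Ran(S))
\]
yields the \emph{equality} $\|(1-P)QR\|=c_0$, with no Pythagoras needed. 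Your overall strategy was right; the execution just needed the roles of $P$ and $Q$ reversed.
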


\begin{proof}
Taking $P=T^{ \dag} \, T$ and $Q=SS^{ \dag}$, then
$$ Ker(P)=Ker(T)\,, \ Ran(P)=Ran(T^{ \dag})=Ran(T^*), $$
$$ Ker(Q)=Ker(S^{ \dag})= Ker(S^*)\,, \ {\rm and} \ \ Ran(Q)=Ran(S).$$
The equivalence of (i), (ii) and (iii) directly follows from the
above equalities and Lemma \ref{closed33}. To establish the
statement of the second part suppose $R$ is the orthogonal
projection onto the orthogonal summand $\overline{Ker(Q)+Ran(P)}$
then $(1-P)R$ is the projection onto
\begin{eqnarray*}
M=Ker(P) \cap [ \, \overline{ Ran(P) + Ker(Q) } \, ]&=& Ker(T)
\cap
[ \, \overline{ Ran(T^*) + Ker(S^*) } \,] \\
&=& Ker(T) \cap [ Ran(T^*)^{ \perp} \cap Ker(S^*)^{ \perp} \, ]^{  \perp} \\
&=& Ker(T) \cap [ Ker(T) \cap Ran(S) ]^{ \perp}.
\end{eqnarray*}
If neither $M$ nor $Ran(S)$ is $\{ 0 \}$, by commutativity of $R$
with $P$ and $Q$, we obtain

\begin{eqnarray*}
\|(1-P)QR \| & = & \| RQ(1-P) \| \\
& = & \| Q(1-P)R \| \\
& = & {\rm sup} \{ \| \langle Q(1-P)Rx, y  \rangle  \|: \,  x,y
\in F \ {\rm and} \  \|x \| \leq 1, \  \| y \| \leq 1 \} \\
& = & {\rm sup} \{ \| \langle (1-P)Rx, Qy  \rangle  \|: \,  x,y
\in F \ {\rm and} \  \|x \| \leq 1, \  \| y \| \leq 1 \} \\
& = & {\rm sup} \{ \| \langle x,y  \rangle  \|: \,  x \in M, \, y
\in Ran(S) \ {\rm and} \  \|x \| \leq 1, \  \| y \| \leq 1 \}\\
& = & c_0(M,Ran(S)).
\end{eqnarray*}
The statement is now derived from the above
argument and Corollary \ref{closed333}.
\end{proof}

Recall that a bounded adjointable operator between Hilbert
C*-modules admits a bounded adjointable Moore-Penrose inverse if
and only if the operator has closed range. This lead us to the
following results.

\begin{corollary} \label{closed7-8}Suppose $S \in \mathcal{L}(E,F)$
and $T \in \mathcal{L}(F,G)$ possess bounded adjointable
Moore-Penrose inverses $S^{ \dag}$ and $T^{ \dag}$. Then $(TS)^{
\dag}$ is bounded if and only if $Ker(T)+Ran(S)$ is an orthogonal
summand, if and only if $Ker(S^*)+Ran(T^*)$ is an orthogonal
summand. Moreover,
if the Dixmier angle between $Ran(S)$ and $Ker(T) \cap [Ker(T)
\cap Ran(S)]^{\perp}$ is positive and
$\overline{Ker(S^*)+Ran(T^*)}$ is an orthogonal summand then
$(TS)^{ \dag}$ is bounded.
\end{corollary}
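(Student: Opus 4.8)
The plan is to derive Corollary~\ref{closed7-8} as an essentially cosmetic restatement of Theorem~\ref{closed5}, using only the already-established equivalence ``closed range $\Leftrightarrow$ bounded Moore-Penrose inverse'' for adjointable operators between Hilbert C*-modules (due to Xu and Sheng, as recalled in the text). First I would observe that the hypothesis of the corollary — $S$ and $T$ possess bounded adjointable Moore-Penrose inverses $S^{\dag}$ and $T^{\dag}$ — is, by that equivalence, exactly the hypothesis of Theorem~\ref{closed5}, namely that $S$ and $T$ have closed range. Similarly, the conclusion ``$(TS)^{\dag}$ is bounded'' is, by the same equivalence applied to the operator $TS \in \mathcal{L}(E,G)$, equivalent to ``$TS$ has closed range''. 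So the three stated equivalent conditions for $(TS)^{\dag}$ to be bounded are precisely the three equivalent conditions (i), (ii), (iii) of Theorem~\ref{closed5}, and the ``moreover'' clause about positivity of the Dixmier angle is precisely the sufficient condition in the final sentence of that theorem (noting that $c_0(M,N) < 1$ is the same as $\alpha_0(M,N) > 0$, i.e. the angle being positive, since $\alpha_0 \in [0,\pi/2]$ and cosine is strictly decreasing there).

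Concretely, the proof would run: since $S$ has a bounded adjointable Moore-Penrose inverse, $Ran(S)$ is closed; since $T$ does, $Ran(T)$ is closed. Now apply Theorem~\ref{closed5}: $TS$ has closed range if and only if $Ker(T)+Ran(S)$ is an orthogonal summand, if and only if $Ker(S^*)+Ran(T^*)$ is an orthogonal summand. Then invoke the Xu–Sheng equivalence once more in the reverse direction: $TS$ has closed range if and only if $(TS)^{\dag}$ is bounded (and adjointable). Stringing these biconditionals together gives the first assertion. For the ``moreover'' part, apply the last sentence of Theorem~\ref{closed5}: if the Dixmier angle between $Ran(S)$ and $Ker(T) \cap [Ker(T) \cap Ran(S)]^{\perp}$ is positive — equivalently $c_0\big(Ran(S), Ker(T) \cap [Ker(T) \cap Ran(S)]^{\perp}\big) < 1$ — and $\overline{Ker(S^*)+Ran(T^*)}$ is an orthogonal summand, then $TS$ has closed range, hence $(TS)^{\dag}$ is bounded.

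There is essentially no obstacle here; the only points requiring a word of care are (a) spelling out that ``the Dixmier angle is positive'' and ``$c_0 < 1$'' are interchangeable, which is immediate from Definition~\ref{closed4} and the monotonicity of cosine on $[0,\pi/2]$, and (b) checking that one is entitled to speak of the Moore-Penrose inverse of $TS$ at all — but once $TS$ is known to have closed range, existence of a bounded adjointable $(TS)^{\dag}$ is exactly the Xu–Sheng theorem, and its uniqueness follows from the defining relations \eqref{MP inverse} as noted in Section~2. So the write-up is short: state the translation, cite Theorem~\ref{closed5} and the closed-range/Moore-Penrose equivalence, and conclude.

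\begin{proof}
Since $S$ and $T$ admit bounded adjointable Moore-Penrose inverses, both $S$ and $T$ have closed range. By Theorem~\ref{closed5}, $TS$ has closed range if and only if $Ker(T)+Ran(S)$ is an orthogonal summand, if and only if $Ker(S^*)+Ran(T^*)$ is an orthogonal summand. On the other hand, a bounded adjointable operator between Hilbert C*-modules admits a bounded adjointable Moore-Penrose inverse precisely when it has closed range, so $(TS)^{\dag}$ is bounded if and only if $TS$ has closed range. Combining these biconditionals yields the first assertion. For the last statement, note that by Definition~\ref{closed4} the Dixmier angle between $Ran(S)$ and $Ker(T) \cap [Ker(T) \cap Ran(S)]^{\perp}$ is positive if and only if $c_0\big(Ran(S), Ker(T) \cap [Ker(T) \cap Ran(S)]^{\perp}\big) < 1$, since $\alpha_0 \in [0,\pi/2]$ and the cosine is strictly decreasing there. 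Hence, if this angle is positive and $\overline{Ker(S^*)+Ran(T^*)}$ is an orthogonal summand, the second part of Theorem~\ref{closed5} gives that $TS$ has closed range, and therefore $(TS)^{\dag}$ is bounded.
\end{proof}
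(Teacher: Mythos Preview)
Your proof is correct and follows exactly the paper's approach: the corollary is stated immediately after the sentence recalling that a bounded adjointable operator has a bounded adjointable Moore-Penrose inverse if and only if it has closed range, and is intended as a direct translation of Theorem~\ref{closed5} via that equivalence. Your write-up simply makes this translation explicit (including the ``$c_0<1$ $\Leftrightarrow$ angle positive'' observation), which is precisely what the paper has in mind.
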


Now, it is natural to ask for the reverse order law, that is, if
$S \in \mathcal{L}(E,F)$ and $T \in \mathcal{L}(F,G)$ possess
bounded adjointable Moore-Penrose inverses $S^{ \dag}$ and $T^{
\dag}$, when does the equation $(TS)^{ \dag}=S^{ \dag} \, T^{
\dag}$ hold? We will answer this question elsewhere. Note that
the above conditions do not ensure the equality.

Recall that a C*-algebra of compact operators is a $c_{0}$-direct
sum of elementary C*-algebras $\mathcal{K}(H_{i})$ of all compact
operators acting on Hilbert spaces $H_{i}, \ i \in I$, i.e.
$\mathcal{A} = c_{0}$-$ \oplus_{i \in I}\mathcal{K} (H_{i})$,
cf.~\cite[Theorem 1.4.5]{ARV}. Suppose $ \mathcal{A}$ is an
arbitrary C*-algebra of compact operators. Magajna and Schweizer
have shown, respectively, that every norm closed (coinciding with
its biorthogonal complement, respectively) submodule of every
Hilbert $ \mathcal{A}$-module is automatically an orthogonal
summand, cf. \cite{MAG, SCH}. In this situation, every bounded
$\mathcal{A}$-linear map $T:E \to F$ is automatically
adjointable. Recently further generic properties of the category
of Hilbert C*-modules over C*-algebras which characterize
precisely the C*-algebras of compact operators have been found in
\cite{FR1, F-S, FS2}. We close the paper with the observation
that we can reformulate Theorem \ref{closed5} in terms of bounded
$\mathcal{A}$-linear maps on Hilbert C*-modules over C*-algebras
of compact operators.
\begin{corollary} \label{closed8} Suppose $ \mathcal{A}$ is an
arbitrary C*-algebra of compact operators, $E, F, G$ are Hilbert
$ \mathcal{A}$-modules and $S: E \to F$ and $T :F \to G$ are
bounded $\mathcal{A}$-linear maps with close range. Then the
following conditions are equivalent:
\begin{list}{(\roman{cou001})}{\usecounter{cou001}}
\item $TS$ has closed range,
\item $Ker(T)+Ran(S)$ is closed,
\item $Ker(S^*)+Ran(T^*)$ is closed.
\end{list}
Furthermore, if $c_0 ( Ran(S), Ker(T) \cap [Ker(T) \cap
Ran(S)]^{\perp} )< 1$ then $TS$ has closed range.
\end{corollary}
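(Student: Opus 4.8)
The plan is to deduce this corollary directly from Theorem \ref{closed5}, exploiting the two structural features of Hilbert modules over a C*-algebra $\mathcal{A}$ of compact operators that are recalled in the paragraph preceding the statement. First I would note that over such an $\mathcal{A}$ every bounded $\mathcal{A}$-linear map between Hilbert $\mathcal{A}$-modules is automatically adjointable, so that $S \in \mathcal{L}(E,F)$, $T \in \mathcal{L}(F,G)$, and the operators $S^*$ and $T^*$ appearing in conditions (iii) and in the angle hypothesis are legitimate. Second, by the theorems of Magajna \cite{MAG} and Schweizer \cite{SCH}, every norm-closed submodule of a Hilbert $\mathcal{A}$-module is an orthogonal summand; in particular, for a closed submodule the properties ``being closed'' and ``being an orthogonal summand'' coincide (one direction being trivial, since an orthogonal summand is closed).

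With these reductions in place, the equivalence of (i), (ii) and (iii) is immediate. Since $S$ and $T$ have closed range, $Ker(T)$, $Ran(S)$, $Ker(S^*)$ and $Ran(T^*)$ are closed submodules of $F$, and conditions (ii) and (iii) of the present statement say exactly that $Ker(T)+Ran(S)$ and $Ker(S^*)+Ran(T^*)$ are closed. By the remark above this is the same as saying that these submodules are orthogonal summands, i.e. conditions (ii) and (iii) of Theorem \ref{closed5}. Hence the equivalence of the three conditions here follows at once from that theorem.

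For the final assertion I would argue as follows. Theorem \ref{closed5} states that $TS$ has closed range whenever $c_0(Ran(S), Ker(T) \cap [Ker(T) \cap Ran(S)]^{\perp}) < 1$ and $\overline{Ker(S^*)+Ran(T^*)}$ is an orthogonal summand. The second hypothesis is now automatic: $\overline{Ker(S^*)+Ran(T^*)}$ is a norm-closed submodule of the Hilbert $\mathcal{A}$-module $F$, hence an orthogonal summand by \cite{MAG, SCH}. Therefore the single condition $c_0(Ran(S), Ker(T) \cap [Ker(T) \cap Ran(S)]^{\perp}) < 1$ already guarantees that $TS$ has closed range, which is what is claimed.

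I do not expect a genuine obstacle in carrying this out; the entire content of the corollary is the observation that the two ``orthogonal summand'' side conditions occurring in Theorem \ref{closed5} become vacuous over a C*-algebra of compact operators. The only points that require a little care are to invoke automatic adjointability before writing $S^*$ and $T^*$ at all, and to keep straight that ``orthogonal summand'' implies ``closed'' so that nothing is lost when translating between the two formulations.
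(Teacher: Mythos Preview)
Your proposal is correct and follows exactly the approach the paper intends: the corollary is stated without proof and is meant to be read off from Theorem~\ref{closed5} together with the facts, recalled just before the statement, that over a C*-algebra of compact operators every bounded $\mathcal{A}$-linear map is adjointable and every closed submodule is an orthogonal summand. You have identified and applied both reductions correctly, including the observation that the hypothesis on $\overline{Ker(S^*)+Ran(T^*)}$ becomes automatic.
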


In view of Corollary \ref{closed8}, one may ask about the converse of the
last conclusion. To find a solution, one way reader has is to solve
Problem \ref{problem-closed}.

{\bf Acknowledgement}: The author would like to thank
the referee for his/her careful reading and useful comments.

\end{document}